\def\baro{\vskip  .2truecm\hfill \hrule height.5pt \vskip  .2truecm}
\def\barba{\vskip -.1truecm\hfill \hrule height.5pt \vskip .4truecm}
\newcommand{\pcite}[1]{\citeauthor{#1}'s \citeyearpar{#1}}
\newtheorem{proposition}{Proposition}
\newtheorem{corollary}{Corollary}
\newtheorem{theorem}{Theorem}
\newtheorem{remark}{Remark}
\newcommand{\X}{{\mathsf{X}}}
\begin{document}

\title{On the Data Augmentation Algorithm for Bayesian Multivariate
  Linear Regression with Non-Gaussian Errors} \author{Qian Qin and
  James P. Hobert \\ Department of Statistics \\ University of
  Florida} \date{December 2015}

\keywords{Drift condition, Geometric ergodicity, Heavy-tailed
  distribution, Markov chain Monte Carlo, Minorization condition,
  Scale mixture}

\maketitle

\begin{abstract}
  Let $\pi$ denote the intractable posterior density that results when
  the likelihood from a multivariate linear regression model with
  errors from a scale mixture of normals is combined with the standard
  non-informative prior.  There is a simple data augmentation
  algorithm (based on latent data from the mixing density) that can be
  used to explore $\pi$.  \citet{hobe:jung:khar:2015} recently
  performed a convergence rate analysis of the Markov chain underlying
  this MCMC algorithm in the special case where the regression model
  is \textit{univariate}.  These authors provide simple sufficient
  conditions (on the mixing density) for geometric ergodicity of the
  Markov chain.  In this note, we extend \pcite{hobe:jung:khar:2015}
  result to the multivariate case.
\end{abstract}

\section{Introduction}
  \label{sec:intro}
  
  Let $Y_1,Y_2,\dots,Y_n$ be independent $d$-dimensional random
  vectors from the multivariate linear regression model
\begin{equation}
  \label{eq:mreg}
  Y_i = \beta^T x_i   + \Sigma^{\frac{1}{2}} \varepsilon_i \;,
\end{equation}
where $x_i$ is a $p \times 1$ vector of known covariates associated
with $Y_i$, $\beta$ is a $p \times d$ matrix of unknown regression
coefficients, $\Sigma^{\frac{1}{2}}$ is an unknown scale matrix, and
$\varepsilon_1,\dots,\varepsilon_n$ are iid errors from a scale
mixture of multivariate normals; that is, from a density of the form
\begin{equation*}
  f_H(\varepsilon) = \int_{0}^{\infty} \frac{u^{\frac{d}{2}}}{(2
    \pi)^{\frac{d}{2}}} \, \exp \Big \{ -\frac{u}{2}
  \varepsilon^T \varepsilon \Big \} h(u) \, du \;,
\end{equation*}
where $h$ is the density function of some positive random variable.
We shall refer to $h$ as a \textit{mixing density}.  For example, when
$h$ is the density of a $\mbox{Gamma}(\frac{\nu}{2}, \frac{\nu}{2})$
random variable, then $f_H$ becomes the multivariate Student's $t$
density with $\nu > 0$ degrees of freedom, which, aside from a
normalizing constant, is given by $\big[ 1 + {\nu}^{-1} \varepsilon^T
\varepsilon\big]^{-\frac{d + \nu}{2}}$.

Let $Y$ denote the $n \times d$ matrix whose $i$th row is $Y_i^T$, and
let $X$ stand for the $n \times p$ matrix whose $i$th row is $x_i^T$,
and, finally, let $\varepsilon$ represent the $n \times d$ matrix
whose $i$th row is $\varepsilon_i^T$.  Using this notation, we can
state the $n$ equations in \eqref{eq:mreg} more succinctly as follows
\[
Y = X \beta + \varepsilon \Sigma^{\frac{1}{2}} \;.
\]
Let $y$ and $y_i$ denote the observed values of $Y$ and $Y_i$,
respectively.

Consider a Bayesian analysis of the data from the regression model
\eqref{eq:mreg} using an improper prior on $(\beta,\Sigma)$ that takes
the form $\omega (\beta , \Sigma) \propto |\Sigma|^{-a} \, I_{{\cal
    S}_d}(\Sigma)$ where ${\cal S}_d \subset \mathbb{R}^{\frac{d(d +
    1)}{2}}$ denotes the space of $d \times d$ positive definite
matrices.  Taking $a=(d+1)/2$ yields the standard non-informative
prior for multivariate location scale problems.  The joint density of
the data from model \eqref{eq:mreg} is, of course, given by
\[
f (y | \beta, \Sigma) = \prod_{i=1}^n \Bigg[ \int_{0}^{\infty}
\frac{u^{\frac{d}{2}}}{(2\pi)^{\frac{d}{2}} |\Sigma|^{\frac{1}{2}}}
\exp \bigg\{ -\frac{u}{2} \Big(y_i - \beta^T x_i\Big)^T
\Sigma^{-1}\Big(y_i - \beta^T x_i\Big) \bigg\} h(u) \, du \Bigg] \;.
\]
Define
\[
m(y) = \int_{{\cal S}_d} \int_{\mathbb{R}^{p \times d}}
f(y|\beta,\Sigma) \, \omega(\beta,\Sigma) \, d\beta \, d\Sigma \;.
\]
The posterior distribution is proper precisely when $m(y) < \infty$,
and is given by
\[
\pi(\beta, \Sigma|y) = \frac{f(y|\beta,\Sigma) \,
  \omega(\beta,\Sigma)}{m(y)} \;.
\]
Let $\Lambda$ denote the $n \times (p+d)$ matrix $(X:y)$.  The
following conditions are necessary for propriety:
\begin{enumerate}
  \item[($N1$)] $\mbox{rank}(\Lambda) = p+d \;;$
  \item[($N2$)] $n > p + 2d - 2a \;.$
\end{enumerate}
We assume throughout that $(N1)$ and $(N2)$ hold.  

There is a well-known DA algorithm that can be used to explore the
intractable posterior $\pi(\beta, \Sigma|y)$ \citep[see,
e.g.,][]{liu:1996}.  In order to state this algorithm, we must
introduce some additional notation.  For $z = (z_1,\dots,z_n)$, let
$Q$ be an $n \times n$ diagonal matrix whose $i$th diagonal element is
$z_i^{-1}$.  Also, define $\Omega = (X^T Q^{-1} X)^{-1}$ and $\mu =
(X^T Q^{-1} X)^{-1} X^T Q^{-1} y$.  We shall assume throughout the
paper that the mixing density $h$ satisfies the following condition
\[
\int_1^\infty u^{\frac{d}{2}} \, h(u) \, du < \infty \;.
\]
We will refer to this as ``condition ${\cal M}$.''  Finally, define a
parametric family of univariate density functions indexed by $s \ge 0$
as follows
\begin{equation}
  \label{eq:psi}
  \psi(u;s) = c(s) u^{\frac{d}{2}} \, e^{ -\frac{s u}{2}} \, h(u) \;,
\end{equation}
where $c(s)$ is the normalizing constant.  When $h$ is a standard
density, $\psi$ often turns out to be one as well, but even in
non-standard cases, it's typically straightforward to make draws from
$\psi(\cdot,s)$ \citep{hobe:jung:khar:2015}.  The DA algorithm calls
for draws from the inverse Wishart ($\mbox{IW}_d$) and matrix normal
($\mbox{N}_{p,d}$) distributions.  The precise forms of the densities
are given in the Appendix.  We now present the DA algorithm.  If the
current state of the DA Markov chain is $(\beta_m,\Sigma_m) =
(\beta,\Sigma)$, then we simulate the new state,
$(\beta_{m+1},\Sigma_{m+1})$, using the following three-step
procedure.

\baro \vspace*{2mm}
\noindent {\rm Iteration $m+1$ of the DA algorithm:}
\begin{enumerate}
\item Draw $\{Z_i\}_{i=1}^n$ independently with $Z_i \sim \psi \Big(
  \cdot \;; \big( \beta^T x_i - y_i)^T \Sigma^{-1} \big( \beta^T x_i -
  y_i) \Big)$, and call the result $z = (z_1,\dots,z_n)$.
\item Draw
\[
\Sigma_{m+1} \sim \mbox{IW}_d \bigg( n-p+2a-d-1, \Big( y^T Q^{-1} y -
  \mu^T \Omega^{-1} \mu \Big)^{-1} \bigg)  \;.
\]
\item Draw $\beta_{m+1} \sim \mbox{N}_{p,d} \big( \mu, \Omega,
  \Sigma_{m+1} \big)$ \vspace*{-2.5mm}
\end{enumerate}
\barba

Denote the DA Markov chain by $\Phi =
\{(\beta_m,\Sigma_m)\}_{m=0}^\infty$, and its state space by $\X :=
\mathbb{R}^{p \times d} \times {\cal S}_d$.  For positive integer $m$,
let $k^m: \X \times \X \rightarrow (0,\infty)$ denote the $m$-step
Markov transition density (Mtd) of $\Phi$, so that if $A$ is a
measurable set in $\X$,
\[
P \Big((\beta_m,\Sigma_m) \in A \, \big| \, (\beta_0,\Sigma_0) =
(\beta,\Sigma) \Big) = \int_A k^m\big( (\beta',\Sigma') \big|
(\beta,\Sigma) \big) \, d\beta' \, d\Sigma' \;.
\]
(The precise form of $k^1$ is given in Section~\ref{sec:proof}.)  If
there exist $M: \X \rightarrow [0,\infty)$ and $\rho \in [0,1)$ such
that, for all $m$,
\begin{equation}
  \label{eq:ge}
  \int_{{\cal S}_d} \int_{\mathbb{R}^{p \times d}} \Big| k^m \big(
  \beta,\Sigma \big| \tilde{\beta},\tilde{\Sigma} \big) -
  \pi(\beta,\Sigma \big| y) \Big| \, d\beta \, d\Sigma \le
  M(\tilde{\beta},\tilde{\Sigma}) \, \rho^m \;,
\end{equation}
then the chain $\Phi$ is \textit{geometrically ergodic}.  (The
quantity on the left-hand side of \eqref{eq:ge} is the total variation
distance between the posterior distribution and the distribution of
$(\beta_m,\Sigma_m)$ conditional on $(\beta_0,\Sigma_0) =
(\tilde{\beta},\tilde{\Sigma})$.)  The main contribution of this paper
is to demonstrate that $\Phi$ is geometrically ergodic as long as $h$
converges to zero at the origin at an appropriate rate.  This result
is important from a practical perspective because geometric ergodicity
guarantees the existence of the central limit theorems that form the
basis of all the standard methods of calculating valid asymptotic
standard errors for MCMC-based estimators (see, e.g.,
\citet{robe:rose:1998}, \citet{jone:hobe:2001} and
\citet{fleg:hara:jone:2008}).

We now define three classes of mixing densities based on behavior near
the origin, and this will allow us to provide a formal statement of
our main result.  Let $h: \mathbb{R}_+ \rightarrow [0,\infty)$ be a
mixing density, where $\mathbb{R}_+ := (0,\infty)$.  If there is a
$\delta>0$ such that $h(u)=0$ for all $u \in (0,\delta)$, then we say
that $h$ is \textit{zero near the origin}.  Now assume that $h$ is
strictly positive in a neighborhood of 0 (i.e., $h$ is not zero near
the origin).  If there exists a $c>-1$ such that
\[
\lim_{u \rightarrow 0} \frac{h(u)}{u^c} \in (0,\infty) \;,
\]
then we say that $h$ is \textit{polynomial near the origin} with power
$c$.  Finally, if for every $c>0$, there exists an $\eta_c>0$ such
that the ratio $\frac{h(u)}{u^c}$ is strictly increasing in
$(0,\eta_c)$, then we say that $h$ is \textit{faster than polynomial
  near the origin}.  As shown in \citet{hobe:jung:khar:2015}
(henceforth, HJ\&K), all of the standard parametric families with
support $(0,\infty)$ are either polynomial near the origin, or faster
than polynomial near the origin.  Here is our main result.

\begin{theorem}
  \label{thm:main}
  Let $h$ be a mixing density that satisfies condition ${\cal M}$.
  Then the DA Markov chain is geometrically ergodic if $h$ is zero
  near the origin, or if $h$ is faster than polynomial near the
  origin, or if $h$ is polynomial near the origin with power $c >
  \frac{n-p+2a-d-1}{2}$.
\end{theorem}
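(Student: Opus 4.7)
The strategy I would follow is the standard drift-plus-minorization approach, adapting HJ\&K's univariate argument to the present matrix setting. The $1$-step Mtd of $\Phi$ admits the mixture representation
\[
k^1\big((\beta',\Sigma') \,\big|\, (\beta,\Sigma)\big) = \int_{(0,\infty)^n} \pi\big(\beta',\Sigma' \,\big|\, z,y\big) \prod_{i=1}^n \psi\big(z_i; s_i(\beta,\Sigma)\big) \, dz\,,
\]
where $s_i(\beta,\Sigma) = (\beta^T x_i - y_i)^T \Sigma^{-1}(\beta^T x_i - y_i)$ and, conditional on $z$, $(\beta',\Sigma')$ is drawn from the matrix-normal/inverse-Wishart pair prescribed by Steps~2--3 of the algorithm. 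This factorization is convenient because, given $z$, all relevant moments of $(\beta',\Sigma')$ are available in closed form.

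For the drift function I would try a matrix analogue of HJ\&K's choice, something of the shape
\[
V(\beta,\Sigma) \;=\; \trace\!\Big(\Sigma^{-1}(y - X\beta)^T(y - X\beta)\Big) \;+\; \trace(\Sigma^{-1}) \;+\; |\Sigma|^{-\alpha}
\]
for a small $\alpha>0$, or a closely related variant. The first term measures the residuals in the $\Sigma^{-1}$-norm and should interact cleanly with the conditional IW/MN moments, while the remaining pieces are needed to control degeneracy when the spectrum of $\Sigma$ becomes extreme and to ensure that sublevel sets of $V$ are compact (modulo boundary behaviour).

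The drift calculation would then proceed in two stages. First, condition on $z$: using the explicit parameters in Steps~2--3 together with standard IW/MN moment formulas, and some bookkeeping with quadratic-form identities, one expects a pointwise bound of the form
\[
\Ex\big[V(\beta',\Sigma') \,\big|\, z\big] \;\le\; A \sum_{i=1}^n z_i \, \big\| y_i - \beta^T x_i \big\|^2 \;+\; B\,,
\]
where $A$ and $B$ depend on $X,y$ but not on $(\beta,\Sigma)$. Second, integrate against $\prod \psi(z_i;s_i) \, dz$. The problem then reduces to controlling the first (and possibly a negative) moment of a draw from $\psi(\cdot;s)$ as a function of $s$, and this is precisely where the three hypotheses on $h$ enter: when $h$ is zero near the origin, $\Ex_{\psi(\cdot;s)}[Z]$ decays exponentially in $s$; when $h$ is faster than polynomial, a Laplace-type estimate gives super-polynomial decay; when $h \sim C u^c$ near the origin, $\psi(\cdot;s)$ is essentially a $\mbox{Gamma}\big(\tfrac{d}{2}+c+1,\tfrac{s}{2}\big)$ density, so $\Ex_{\psi(\cdot;s)}[Z] \sim (d+2c+2)/s$, and the threshold $c > (n-p+2a-d-1)/2$ is exactly what is needed for this $1/s$ decay to dominate the IW degrees-of-freedom factor appearing in $\Ex[(\Sigma')^{-1} \mid z]$. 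Assembling the pieces should yield a geometric drift $\Ex[V(\beta',\Sigma') \mid (\beta,\Sigma)] \le \rho V(\beta,\Sigma) + L$ with $\rho<1$.

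Minorization on $\{V \le L\}$ would then follow from the uniform lower-bound argument standard in this literature: on such a set, the parameters $s_i(\beta,\Sigma)$ and $(\mu,\Omega)$ vary in a bounded region, and one can push a strictly positive density underneath $k^1$ essentially as in HJ\&K. The principal obstacle, I expect, is the drift step rather than the minorization---specifically, carrying out the quadratic-form algebra in matrix form so that the coefficients $A$ and $B$ above are sharp, and then choosing the exponent $\alpha$ in $V$ so that all three regimes of $h$ produce a genuine contraction. Handling the borderline polynomial case with $c$ just above $(n-p+2a-d-1)/2$ is where I expect the analysis to be most delicate, since the $1/s$ decay and the IW growth are then in exact balance and must be separated by eigenvalue estimates on $\Sigma$.
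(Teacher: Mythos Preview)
Your overall architecture---drift plus minorization, with the drift computed in two stages via the latent $z$---is correct, and matches the paper. But the specific drift bound you write down has the wrong shape, and as stated the argument does not close.

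Given $z$, the law of $(\beta',\Sigma')$ depends only on $z$ and the data, \emph{not} on the current state $(\beta,\Sigma)$. Hence $\Ex[V(\beta',\Sigma')\mid z]$ is a function of $z$ alone, and a right-hand side containing the current $\beta$ and the Euclidean norms $\|y_i-\beta^Tx_i\|^2$ (rather than the $\Sigma^{-1}$-weighted quantities $s_i$) is a red flag. The bound that actually comes out of the IW/MN moment calculation (this is the Roy--Hobert computation the paper invokes) is
\[
\Ex\big[V(\beta',\Sigma')\,\big|\,z\big]\;\le\;(n-p+2a-1)\sum_{i=1}^n z_i^{-1}\,,
\]
so the second-stage quantity one must control is the \emph{inverse} moment
\[
\Ex_{\psi(\cdot;s)}\big[Z^{-1}\big]\;=\;\frac{\int_0^\infty u^{(d-2)/2}e^{-su/2}h(u)\,du}{\int_0^\infty u^{d/2}e^{-su/2}h(u)\,du}\,,
\]
not $\Ex_{\psi(\cdot;s)}[Z]$. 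In the polynomial regime $h(u)\sim Cu^c$ this inverse moment grows like $s/(d+2c)$, and the threshold $c>(n-p+2a-d-1)/2$ is exactly the condition $(n-p+2a-1)/(d+2c)<1$ needed for contraction. With your bound as written, integrating over $z$ produces terms of the form $\|y_i-\beta^Tx_i\|^2/s_i$, which lie between the extreme eigenvalues of $\Sigma$ and cannot be absorbed back into $V$; the drift inequality does not close, and this is precisely why you found yourself reaching for the extra pieces $\trace(\Sigma^{-1})$ and $|\Sigma|^{-\alpha}$.

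Those extra pieces are in fact unnecessary. The paper uses only $V(\beta,\Sigma)=\sum_i s_i(\beta,\Sigma)$, which is your first trace term. Minorization on $\{V\le l\}$ does not require compact sublevel sets: one lower-bounds the $z$-density $\pi(z\mid\tilde\beta,\tilde\Sigma)=\prod_i\psi(z_i;\tilde r_i)$ directly, using $\tilde r_i\le l$ to replace each $e^{-\tilde r_i z_i/2}$ by $e^{-l z_i/2}$. Finally, once the key inequality $\Ex_{\psi(\cdot;s)}[Z^{-1}]\le\lambda s+L$ is isolated, the paper reduces the $d$-dimensional case to HJ\&K's $d=1$ result by the simple reweighting $h^*(u)\propto u^{(d-1)/2}h(u)$, which shifts the polynomial power from $c$ to $c+(d-1)/2$ and explains the appearance of $d$ in the threshold.
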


\begin{remark}
  Theorem~\ref{thm:main} is the multivariate version of HJ\&K's
  univariate result.  However, because the parametrization used in
  HJ\&K is slightly different than that used here, setting $d=1$ in
  Theorem~\ref{thm:main} does not yield HJ\&K's result.  In
  particular, whereas we parametrize our model in terms of $\Sigma$,
  which is the natural parametrization in the multivariate setting,
  HJ\&K use $\sqrt{\Sigma}$.  Hence, we cannot directly compare the
  two results in the case $d=1$ since our hyperparameter $a$ has a
  different meaning than theirs.  To put the two models on the same
  footing when $d=1$, we would have to change our prior to $\omega^*
  (\beta, \Sigma) \propto |\Sigma|^{-\frac{a+1}{2}} \, I_{{\cal
      S}_d}(\Sigma)$.  Note that, if we set $d=1$ and replace $a$ by
  $(a+1)/2$ in Theorem~\ref{thm:main}, then we do indeed recover
  HJ\&K's result.
\end{remark}

\begin{remark}
  Fix $\nu>0$ and suppose that the mixing density is
  $\mbox{Gamma}(\frac{\nu}{2}, \frac{\nu}{2})$, which is clearly
  polynomial near the origin with power $\frac{\nu}{2}-1$.  It follows
  from Theorem~\ref{thm:main} that the DA Markov chain is
  geometrically ergodic as long as $\nu > n-p+2a-d+1$.  In particular,
  when $a = (d+1)/2$, we need $\nu > n-p+2$.  This special case of
  Theorem~\ref{thm:main} was established in \citet{roy:hobe:2010}.
\end{remark}

\section{Proof of the main result}
\label{sec:proof}

In order to formally define the Markov chain that the DA algorithm
simulates, we must introduce the latent data model.  Suppose that,
conditional on $(\beta,\Sigma)$, $\{(Y_i,Z_i)\}_{i=1}^n$ are iid pairs
such that
\[
Y_i| Z_i=z_i \sim \mbox{N}_d \big( \beta^T x_i, \Sigma
/ z_i \big)
\]
\[
Z_i \sim h \;.
\]
Let $z = (z_1,\dots,z_n)$ and denote the joint density of
$\{Y_i,Z_i\}_{i=1}^n$ by $\tilde{f}(y, z \big| \beta, \Sigma)$.  It's
easy to see that $\int_{\mathbb{R}_+^n} \tilde{f}(y, z \big| \beta,
\Sigma) \, dz = f(y \big| \beta, \Sigma)$.  Thus, if we define
\[
\pi(\beta, \Sigma,z \big| y) = \frac{\tilde{f}(y, z \big| \beta,
  \Sigma) \, \omega (\beta , \Sigma)}{m(y)} \;,
\]
then we have $\int_{\mathbb{R}_+^n} \pi(\beta, \Sigma,z \big| y) \, dz
= \pi(\beta, \Sigma \big| y)$ which is the posterior (target) density.
From here on, to simplify notation, we will write
$\pi(\beta,\Sigma,z)$ instead of $\pi(\beta,\Sigma,z|y)$.  The Mtd of
the chain underlying the DA algorithm is given by
\begin{equation*}
  k \big( \beta,\Sigma \big| \tilde{\beta},\tilde{\Sigma} \big) =
  \int_{\mathbb{R}^n_+} \pi(\beta,\Sigma|z) \,
  \pi(z|\tilde{\beta},\tilde{\Sigma}) \, dz \;,
\end{equation*}
where $\pi(\beta,\Sigma|z)$ and $\pi(z|\beta,\Sigma)$ are conditional
densities associated with $\pi(\beta,\Sigma,z)$.  The precise forms of
these densities can be gleaned from the statement of the DA algorithm
given in the Introduction.  (Note that the algorithm exploits the
representation $\pi(\beta, \Sigma \big| z) = \pi(\beta \big|
\Sigma,z) \pi(\Sigma|z)$.)  An argument used in Section 2 of
HJ\&K can be used here to show that $k$ is strictly positive on $\X
\times \X$, and this implies that (when the posterior is proper) the
DA Markov chain is Harris recurrent.  We now use a standard drift and
minorization argument to develop a sufficient condition for geometric
ergodicity of $\Phi$.

\begin{proposition}
  \label{prop:drift_smn}
  Suppose that there exist $\lambda \in \big[ 0, \frac{1}{n-p+2a-1}
  \big)$ and $L \in \mathbb{R}$ such that
\begin{equation}
   \label{eq:key}
   \frac{\int_0^\infty u^{\frac{d-2}{2}}  \, e^{ -\frac{s u}{2}} \, h(u)
     \, du}{\int_0^\infty u^{\frac{d}{2}} \, e^{ -\frac{s u}{2}} \, h(u) \, du}
   \le \lambda s + L
\end{equation}
for every $s \ge 0$.  Then the DA Markov chain is geometrically
ergodic.
\end{proposition}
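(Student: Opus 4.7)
The plan is to follow the standard drift-and-minorization route to geometric ergodicity, using the drift function
$$V(\beta, \Sigma) \;=\; \trace\bigl(\Sigma^{-1}(y - X\beta)^T(y - X\beta)\bigr) \;=\; \sum_{i=1}^n s_i(\beta, \Sigma),$$
where $s_i(\beta, \Sigma) = (y_i - \beta^T x_i)^T \Sigma^{-1}(y_i - \beta^T x_i)$. This $V$ is natural because $s_i(\beta_m,\Sigma_m)$ is exactly the parameter of the density $\psi$ from which $Z_i$ is drawn in Step~1 of the DA algorithm, so hypothesis \eqref{eq:key} yields $\Ex[Z_i^{-1} \mid \beta_m, \Sigma_m] \le \lambda s_i(\beta_m, \Sigma_m) + L$, and therefore $\Ex\!\bigl[\sum_i Z_i^{-1} \mid \beta_m, \Sigma_m\bigr] \le \lambda V(\beta_m, \Sigma_m) + nL$.

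The central computation is to evaluate $g(z) := \Ex[V(\beta_{m+1}, \Sigma_{m+1}) \mid Z=z]$ and bound it by a constant multiple of $\sum_i z_i^{-1}$. I would first integrate out $\beta_{m+1}$ using the matrix-normal moments of $\beta_{m+1} \mid \Sigma_{m+1}, z$, which after a short index calculation gives
$$\Ex[V \mid \Sigma_{m+1}, z] \;=\; \trace\bigl(\Sigma_{m+1}^{-1}(y - X\mu)^T(y - X\mu)\bigr) + d\,\trace(X^T X \Omega).$$
Next, I would apply the inverse-Wishart moment identity $\Ex[\Sigma_{m+1}^{-1} \mid z] = (n-p+2a-d-1)\,R(z)^{-1}$, where $R(z) = y^T Q^{-1} y - \mu^T \Omega^{-1} \mu = (y-X\mu)^T Q^{-1}(y-X\mu)$, leaving two traces of the form $\trace(R(z)^{-1}T)$ and $\trace(X^T X \Omega)$ to control.

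The key technical step is a matrix inequality: by operator convexity of $A \mapsto A^{-1}$ on the positive-definite cone, for any matrix $U$ with $U^T U = I_k$,
$$\trace\bigl((U^T Q^{-1} U)^{-1}\bigr) \;\le\; \trace(U^T Q U) \;=\; \sum_{i=1}^n \|u_i\|^2 z_i^{-1},$$
where $u_i$ denotes the $i$th row of $U$. Applying this with $U$ equal to the thin left singular factor of $X$ yields $\trace(X^T X \Omega) \le \sum_i h_{X,ii}\,z_i^{-1}$ with $h_{X,ii}\le 1$; applying it with $U$ equal to the thin left singular factor of $y - X\mu$ yields $\trace(R(z)^{-1}(y-X\mu)^T(y-X\mu)) \le \sum_i h_{r,ii}\,z_i^{-1}$ with $h_{r,ii}\le 1$. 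Combining with the Wishart constant $n - p + 2a - d - 1$ and the factor $d$ from the matrix-normal step, the coefficient of each $z_i^{-1}$ in $g(z)$ is at most $(n-p+2a-d-1) + d = n - p + 2a - 1$, so $g(z) \le (n - p + 2a - 1)\sum_i z_i^{-1}$. Taking a final expectation over $Z$ and invoking hypothesis \eqref{eq:key} gives
$$\Ex[V(\beta_{m+1},\Sigma_{m+1})\mid \beta_m,\Sigma_m] \;\le\; \lambda(n - p + 2a - 1)\,V(\beta_m,\Sigma_m) + (n-p+2a-1)\,nL,$$
a geometric drift with contraction factor $\lambda(n-p+2a-1) < 1$.

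To close the argument I would establish a minorization on sublevel sets $\{V \le r\}$ using the strict positivity and continuity of $k$ on $\X \times \X$ (the argument parallels the one in HJ\&K; the sublevel sets are not compact in $\X$ because $\Sigma$ can approach the boundary, but one obtains minorization on the relevant subsets by a standard uniform-lower-bound argument). Drift plus minorization then gives geometric ergodicity via a standard theorem (e.g., Rosenthal, 1995). In my view, the main obstacle is the operator-convexity step: the crucial observation is that both traces appearing in $g(z)$ can be massaged into the form $\trace((U^T Q^{-1} U)^{-1})$ for appropriate orthonormal $U$, and the sharpness of the resulting coefficient $n - p + 2a - 1$, which matches the upper bound on $\lambda$ in the hypothesis exactly, depends on the leverage scores $h_{X,ii}$ and $h_{r,ii}$ being bounded above by $1$.
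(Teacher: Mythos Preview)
Your drift argument is correct and coincides with the paper's: the same drift function $V(\beta,\Sigma)=\sum_i (y_i-\beta^Tx_i)^T\Sigma^{-1}(y_i-\beta^Tx_i)$ is used, and the paper obtains the identical bound $\Ex[V(\beta_{m+1},\Sigma_{m+1})\mid z]\le (n-p+2a-1)\sum_i z_i^{-1}$, citing Roy and Hobert (2010) for it rather than deriving it. Your operator-convexity route (via the Davis--Choi--Jensen inequality $(U^TQ^{-1}U)^{-1}\le U^TQU$ for $U^TU=I$ and the leverage bound $(UU^T)_{ii}\le 1$) is a clean, self-contained way to recover exactly that constant, so on the drift side there is nothing to add.

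The gap is in the minorization. You propose to use strict positivity and continuity of $k$, then correctly observe that the sublevel sets $\{V\le l\}$ are not compact (indeed $\Sigma$ can have arbitrarily large eigenvalues while $V$ stays small), so a compactness argument cannot be invoked; the phrase ``standard uniform-lower-bound argument'' does not fill this in. The paper does \emph{not} use positivity/continuity of $k$ for the minorization. Instead it minorizes the conditional density $\pi(z\mid\tilde\beta,\tilde\Sigma)=\prod_i c(\tilde r_i)\,z_i^{d/2}e^{-\tilde r_i z_i/2}h(z_i)$ directly: on $\{V\le l\}$ each $\tilde r_i\le l$, so $e^{-\tilde r_i z_i/2}\ge e^{-l z_i/2}$, and for every $s\ge 0$ one has $c(s)\ge\bigl(\int_0^\infty u^{d/2}h(u)\,du\bigr)^{-1}$. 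Multiplying over $i$ gives
\[
\pi(z\mid\tilde\beta,\tilde\Sigma)\;\ge\;\Bigl[\tfrac{\int_0^\infty u^{d/2}e^{-lu/2}h(u)\,du}{\int_0^\infty u^{d/2}h(u)\,du}\Bigr]^{n}\prod_{i=1}^n \psi(z_i;l)\;=\;\epsilon\,\hat f(z),
\]
and then $k(\beta,\Sigma\mid\tilde\beta,\tilde\Sigma)\ge\epsilon\int\pi(\beta,\Sigma\mid z)\hat f(z)\,dz$. This explicit construction bypasses the non-compactness entirely and is the missing ingredient in your sketch.
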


\begin{proof}
  We will prove the result by establishing a drift condition and an
  associated minorization condition, as in \pcite{rose:1995} Theorem
  12.  Our drift function, $V: \mathbb{R}^{p \times d} \times {\cal
    S}_d$, is as follows
\begin{equation*}
  V(\beta,\Sigma) = \sum_{i=1}^n \big(y_i - \beta^T x_i)^T \Sigma^{-1}
  \big(y_i - \beta^T x_i) \;.
\end{equation*}

\noindent {\bf Part I: Minorization}.  Fix $l>0$ and define
\[
B_l = \big \{ (\beta,\Sigma) : V(\beta,\Sigma) \le l \big \} \;.
\]
We will construct $\epsilon \in (0,1)$ and a density function $f^*:
\mathbb{R}^{p \times d} \times {\cal S}_d \rightarrow [0,\infty)$
(both of which depend on $l$) such that, for all
$(\tilde{\beta},\tilde{\Sigma}) \in B_l$,
\[
k(\beta,\Sigma | \tilde{\beta},\tilde{\Sigma}) \ge \epsilon
f^*(\beta,\Sigma) \;.
\]
This is the minorization condition.  It suffices to construct
$\epsilon \in (0,1)$ and a density function $\hat{f}: \mathbb{R}_+^n
\rightarrow [0,\infty)$ such that, for all
$(\tilde{\beta},\tilde{\Sigma}) \in B_l$,
\[
\pi(z|\tilde{\beta},\tilde{\Sigma}) \ge \epsilon \hat{f}(z) \;.
\]
Indeed, if such an $\hat{f}$ exists, then for all
$(\tilde{\beta},\tilde{\Sigma}) \in B_l$, we have
\begin{equation*}
  k \big( \beta,\Sigma \big| \tilde{\beta},\tilde{\Sigma} \big) =
  \int_{\mathbb{R}^n_+} \pi(\beta,\Sigma|z) \,
  \pi(z|\tilde{\beta},\tilde{\Sigma}) \, dz \ge \epsilon
  \int_{\mathbb{R}^n_+} \pi(\beta,\Sigma|z) \, \hat{f}(z) \, dz
  = \epsilon f^*(\beta,\Sigma) \;.
\end{equation*}
We now build $\hat{f}$.  Define $\tilde{r}_i = \big(y_i -
\tilde{\beta}^T x_i)^T \tilde{\Sigma}^{-1} \big(y_i - \tilde{\beta}^T
x_i)$, and note that
\begin{equation*}
  \pi(z|\tilde{\beta},\tilde{\Sigma}) = \prod_{i=1}^n
  \psi(z_i;\tilde{r}_i) = \prod_{i=1}^n c(\tilde{r}_i) z^{\frac{d}{2}}_i \, e^{
    -\frac{\tilde{r}_i z_i}{2}} \, h(z_i) \;.
\end{equation*}
Now, for any $s \ge 0$, we have
\[
c(s) = \frac{1}{\int_0^\infty u^{\frac{d}{2}} \, e^{ -\frac{s u}{2}}
  \, h(u) \, du} \ge \frac{1}{\int_0^\infty u^{\frac{d}{2}} \, h(u) \,
  du} \;.
\]
By definition, if $(\tilde{\beta},\tilde{\Sigma}) \in B_l$, then
$\sum_{i=1}^n \tilde{r}_i \le l$, which implies that $\tilde{r}_i \le
l$ for each $i=1,\dots,n$.  Thus, if $(\tilde{\beta},\tilde{\Sigma})
\in B_l$, then for each $i=1,\dots,n$, we have
\[
z^{\frac{d}{2}}_i \, e^{ -\frac{\tilde{r}_i z_i}{2}} \, h(z_i) \ge
z^{\frac{d}{2}}_i \, e^{ -\frac{l z_i}{2}} \, h(z_i) \;.
\]
Therefore,
\begin{align*}
  \pi(z|\tilde{\beta},\tilde{\sigma}) & \ge \bigg[ \int_0^\infty
  u^{\frac{d}{2}} \, h(u) \, du \bigg]^{-n} \prod_{i=1}^n
  z^{\frac{d}{2}}_i \, e^{- \frac{l z_i}{2}} \, h(z_i) \\ & = \bigg[
  \frac{\int_0^\infty u^{\frac{d}{2}} \, e^{- \frac{l u}{2}} \, h(u)
    \, du}{\int_0^\infty u^{\frac{d}{2}} \, h(u) \, du} \bigg]^n
  \prod_{i=1}^n \frac{z^{\frac{d}{2}}_i \, e^{- \frac{l z_i}{2}} \,
    h(z_i)}{\int_0^\infty u^{\frac{d}{2}} \, e^{- \frac{l u}{2}} \,
    h(u) \, du} \\ & := \epsilon \hat{f}(z) \;.
\end{align*}
Hence, our minorization condition is established.

\noindent {\bf Part II: Drift}.  To establish the required drift
condition, we need to bound the expectation of
$V(\beta_{m+1},\Sigma_{m+1})$ given that $(\beta_m,\Sigma_m) =
(\tilde{\beta},\tilde{\Sigma})$.  This expectation is given by
\begin{align*}
  \int_{{\cal S}_d} \int_{\mathbb{R}^{p \times d}} & V(\beta,\Sigma)
  \, k(\beta,\Sigma | \tilde{\beta},\tilde{\Sigma}) \, d\beta \,
  d\Sigma
  \nonumber \\
  & = \int_{\mathbb{R}_+^n} \Bigg \{ \int_{{\cal S}_d} \bigg[
  \int_{\mathbb{R}^{p  \times d}} V(\beta,\Sigma) \, \pi(\beta|\Sigma,z) \,
  d\beta \bigg] \, \pi(\Sigma|z) \, d\Sigma \Bigg \}
  \pi(z|\tilde{\beta},\tilde{\Sigma}) \, dz \;.
\end{align*}
Calculations in \pcite{roy:hobe:2010} Section 4 show that
\begin{equation*}
  \int_{{\cal S}_d} \bigg[
  \int_{\mathbb{R}^{p  \times d}} V(\beta,\Sigma) \, \pi(\beta|\Sigma,z) \,
  d\beta \bigg] \, \pi(\Sigma|z) \, d\Sigma \le (n-p+2a-1) \sum_{i=1}^n
  \frac{1}{z_i} \;.
\end{equation*}
It follows from \eqref{eq:key} that
\begin{align*}
  \int_{\mathbb{R}_+^n} \Bigg \{ \int_{{\cal S}_d} \bigg[
  \int_{\mathbb{R}^{p \times d}} V(\beta,\Sigma) \,
  \pi(\beta|\Sigma,z) & \, d\beta \bigg] \, \pi(\Sigma|z) \, d\Sigma
  \Bigg \} \pi(z|\tilde{\beta},\tilde{\Sigma}) \, dz \\ & \le
  (n-p+2a-1) \int_{\mathbb{R}_+^n} \bigg[ \sum_{i=1}^n \frac{1}{z_i}
  \bigg] \pi(z|\tilde{\beta},\tilde{\sigma}) \, dz \\ & = (n-p+2a-1)
  \sum_{i=1}^n c(\tilde{r}_i) \int_0^\infty
  u^{\frac{d-2}{2}} \, e^{ -\frac{\tilde{r}_i u}{2}} \, h(u) \, du \\
  & \le (n-p+2a-1) \bigg(\lambda \sum_{i=1}^n \tilde{r}_i + nL \bigg)
  \\ & = \lambda (n-p+2a-1) V(\tilde{\beta},\tilde{\Sigma}) +
  (n-p+2a-1) n L \\ & = \lambda' V(\tilde{\beta},\tilde{\sigma}) + L'
  \;,
\end{align*}
where $\lambda' := \lambda (n-p+2a-1) \in [0,1)$ and $L' := (n-p+2a-1)
n L$.  Since the minorization condition holds for any $l>0$, we can
appeal to \pcite{rose:1995} Theorem 12 to get the result.  This
completes the proof.
\end{proof}

Suppose that $g$ is a mixing density.  For a positive integer $d$, we
say that $g$ satisfies Condition $\mathcal{A}_d$ with $\lambda \in
[0,\infty)$ if there exists $k_\lambda \in \mathbb{R}$ such that
\begin{equation*} 
  \frac{\int_0^\infty u^{\frac{d-2}{2}} \, e^{ -\frac{s u}{2}} \, g(u)
    \, du}{\int_0^\infty u^{\frac{d}{2}} \, e^{ -\frac{s u}{2}} \, g(u) \, du}
  \le \lambda s + k_\lambda
\end{equation*}
for every $s \ge 0$.  The following result can be proven using the
results in HJ\&K's Section 4.

\begin{theorem}
  \label{thm:lambda}
  Suppose that $g$ is a mixing density such that $\int_1^\infty
  \sqrt{u} \, g(u) \, du < \infty$.  If $g$ is either zero near the
  origin or faster than polynomial near the origin, then $g$ satisfies
  Condition $\mathcal{A}_1$ with any $\lambda > 0$.  If $g$ is
  polynomial near the origin with power $c>-\frac{1}{2}$, then $g$
  satisfies Condition $\mathcal{A}_1$ with any $\lambda >
  \frac{1}{2c+1}$.
\end{theorem}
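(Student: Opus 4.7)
The plan is to reduce directly to the ratio bounds established in HJ\&K's Section 4, since the integral ratio appearing in Condition $\mathcal{A}_1$ is structurally identical to theirs. Writing
\[
R(s) = \frac{\int_0^\infty u^{-1/2}\,e^{-su/2}\,g(u)\,du}{\int_0^\infty u^{1/2}\,e^{-su/2}\,g(u)\,du},
\]
the task is to show $R(s) \le \lambda s + k_\lambda$ globally under each of the three scenarios at the origin. My strategy is to split the analysis into a uniform bound on a compact neighborhood of $s=0$ and an affine bound for large $s$, then glue them by absorbing constants into $k_\lambda$.

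First I would verify that $R$ is continuous and bounded on any $[0,s_0]$. The hypothesis $\int_1^\infty \sqrt{u}\,g(u)\,du < \infty$ makes the denominator at $s=0$ positive and finite, and since $u^{-1/2} \le \sqrt{u}$ on $[1,\infty)$ the tail of the numerator is controlled as well. Near $u=0$, integrability of $u^{-1/2} g(u)$ is vacuous when $g$ vanishes near the origin, follows from $c > -1/2$ in the polynomial case (where $u^{c-1/2}$ is integrable), and follows from $g(u) \le u^c g(\eta_c)/\eta_c^c$ for any $c>0$ in the faster-than-polynomial case. Continuity of $R$ on $[0,s_0]$ then follows by dominated convergence.

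Second, for large $s$ I would treat the three cases in turn. In the polynomial case with power $c$, substituting $u=v/s$ in both numerator and denominator and applying a Laplace-type argument with $g(u)/u^c \to L \in (0,\infty)$ as $u\to 0$ yields leading terms proportional to $\Gamma(c+1/2)(s/2)^{-(c+1/2)}$ and $\Gamma(c+3/2)(s/2)^{-(c+3/2)}$ respectively, whose ratio is $s/(2c+1)$; hence any $\lambda > 1/(2c+1)$ dominates $R(s)$ for $s$ large. In the faster-than-polynomial case, for any $c>0$ I would use the monotonicity of $g(u)/u^c$ on $(0,\eta_c)$ to majorize $g$ by a multiple of $u^c$, reducing to the polynomial case with arbitrarily large exponent and thus forcing the slope $1/(2c+1)$ down to zero; any $\lambda>0$ then suffices. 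In the zero-near-origin case with $g=0$ on $(0,\delta)$, the inequality $u^{-1/2} \le \delta^{-1} u^{1/2}$ valid for $u\ge\delta$ yields $R(s) \le 1/\delta$ uniformly in $s$, so once more any $\lambda>0$ works. Combining with the compact-set bound produces the required affine majorant after enlarging $k_\lambda$.

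The main obstacle is the Laplace asymptotic in the polynomial case: one must justify the interchange of limit and integral on a shrinking neighborhood of zero and control the contribution from $[\eta,\infty)$, which decays exponentially in $s$ but must be shown negligible relative to the polynomial-order leading term. This is precisely the content of the corresponding lemmas in HJ\&K's Section 4, and since $R(s)$ here coincides in form with the univariate ratio studied there, their arguments transfer verbatim, making the proof a matter of invoking those lemmas rather than re-deriving them.
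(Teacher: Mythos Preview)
Your proposal is correct and aligns with the paper's treatment: the paper gives no standalone proof of this theorem but simply states that it ``can be proven using the results in HJ\&K's Section~4,'' which is precisely where your sketch lands after outlining the compact-set/large-$s$ decomposition and the three-case analysis. The additional detail you supply (finiteness of $R(0)$, the Laplace asymptotic yielding slope $1/(2c+1)$, the $1/\delta$ bound in the zero-near-origin case) is consistent with HJ\&K's arguments and goes beyond what the present paper records.
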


We use this result to prove the following:

\begin{corollary}
  \label{cor:lambda}
  Fix a positive integer $d$ and suppose that $g$ is a mixing density
  with $\int_1^\infty u^{\frac{d}{2}} \, g(u) \, du < \infty$.  If $g$
  is either zero near the origin or faster than polynomial near the
  origin, then $g$ satisfies Condition $\mathcal{A}_d$ with any
  $\lambda > 0$.  If $g$ is polynomial near the origin with power
  $c>-\frac{1}{2}$, then $g$ satisfies Condition $\mathcal{A}_d$ with
  any $\lambda > \frac{1}{2c+d}$.
\end{corollary}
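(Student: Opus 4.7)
The plan is to reduce Corollary~\ref{cor:lambda} to Theorem~\ref{thm:lambda} by a change of mixing density. Specifically, I would introduce $\tilde g(u) = C^{-1} u^{(d-1)/2} g(u)$, where $C = \int_0^\infty u^{(d-1)/2} g(u) \, du$ is a normalizing constant. The key algebraic observation is that
\[
\frac{\int_0^\infty u^{\frac{d-2}{2}} e^{-\frac{s u}{2}} g(u) \, du}{\int_0^\infty u^{\frac{d}{2}} e^{-\frac{s u}{2}} g(u) \, du} = \frac{\int_0^\infty u^{-\frac{1}{2}} e^{-\frac{s u}{2}} \tilde g(u) \, du}{\int_0^\infty u^{\frac{1}{2}} e^{-\frac{s u}{2}} \tilde g(u) \, du},
\]
so Condition $\mathcal{A}_d$ for $g$ with parameter $\lambda$ is exactly Condition $\mathcal{A}_1$ for $\tilde g$ with the same $\lambda$. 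Thus, once I check that $\tilde g$ is a valid mixing density meeting the hypotheses of Theorem~\ref{thm:lambda}, the corollary follows immediately by applying that theorem to $\tilde g$.

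Next, I would verify the hypotheses. Finiteness of $C$ follows from the near-origin classification of $g$ together with the bound $\int_1^\infty u^{(d-1)/2} g(u) \, du \le \int_1^\infty u^{d/2} g(u) \, du < \infty$, which is the given integrability assumption. The tail assumption needed for Theorem~\ref{thm:lambda}, namely $\int_1^\infty u^{1/2} \tilde g(u) \, du < \infty$, is equivalent to $\int_1^\infty u^{d/2} g(u) \, du < \infty$ and so also holds.

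Then I would translate the three near-origin conditions from $g$ to $\tilde g$. If $g$ vanishes on $(0,\delta)$, then so does $\tilde g$. If $g$ is polynomial near the origin with power $c$, then $\tilde g$ is polynomial near the origin with power $c + (d-1)/2$; under $c > -1/2$ this exceeds $-1/2$ as required, and Theorem~\ref{thm:lambda} gives Condition $\mathcal{A}_1$ for $\tilde g$ for any $\lambda > 1/(2(c+(d-1)/2)+1) = 1/(2c+d)$. The one step that needs a brief argument rather than a one-line check is the ``faster than polynomial'' case: I would show that if $g(u)/u^{c_0}$ is eventually increasing for every $c_0>0$, then so is $\tilde g(u)/u^{c'} = u^{(d-1)/2-c'+c_0}\,[g(u)/u^{c_0}]$, by choosing $c_0 > c' - (d-1)/2$ so that both factors are positive and increasing near $0$.

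The main obstacle, such as it is, will be carefully handling this third case and the near-origin integrability of $\tilde g$ across all three regimes; everything else is bookkeeping around the identity above. Once these checks are in place, Theorem~\ref{thm:lambda} applied to $\tilde g$ delivers the stated bounds on $\lambda$ for Condition $\mathcal{A}_d$ for $g$, completing the proof.
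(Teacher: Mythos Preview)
Your proposal is correct and follows essentially the same route as the paper: define the auxiliary mixing density proportional to $u^{(d-1)/2} g(u)$, use the identity equating the $\mathcal{A}_d$ ratio for $g$ with the $\mathcal{A}_1$ ratio for the auxiliary density, check that the near-origin classification and the tail integrability transfer, and then invoke Theorem~\ref{thm:lambda}. Your write-up is in fact a bit more explicit than the paper's (you justify finiteness of the normalizing constant and spell out the faster-than-polynomial transfer, which the paper dismisses as ``easy to see''); the only microscopic point is that in the faster-than-polynomial argument you should also take $c_0>0$ so that the hypothesis on $g$ applies, but this is automatic once $c' > (d-1)/2$ and harmless to add otherwise.
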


\begin{proof}
  Let $g^*(u)$ be the mixing density that is proportional to
  $u^{\frac{d-1}{2}} \, g(u)$.  Then $\int_1^\infty \sqrt{u} \, g^*(u)
  \, du < \infty$, so Theorem~\ref{thm:lambda} applies to $g^*$, and
  \begin{equation}
    \label{eq:star}
    \frac{\int_0^\infty u^{-\frac{1}{2}} \, e^{ -\frac{s u}{2}} \,
      g^*(u) \, du}{\int_0^\infty u^{\frac{1}{2}} \, e^{ -\frac{s u}{2}}
      \, g^*(u) \, du} = \frac{\int_0^\infty u^{\frac{d-2}{2}} \, e^{
        -\frac{s u}{2}} \, g(u) \, du}{\int_0^\infty u^{\frac{d}{2}} \,
      e^{ -\frac{s u}{2}} \, g(u) \, du} \;.
  \end{equation}
  Now, it's easy to see that, if $g$ is zero near the origin, then so
  is $g^*$, and if $g$ is faster than polynomial near the origin, then
  so is $g^*$.  Hence, if $g$ is either zero near the origin or faster
  than polynomial near the origin, then by Theorem~\ref{thm:lambda},
  $g^*$ satisfies Condition $\mathcal{A}_1$ with any $\lambda > 0$,
  which implies (by \eqref{eq:star}) that $g$ satisfies Condition
  $\mathcal{A}_d$ with any $\lambda > 0$.  Finally, assume that $g$ is
  polynomial near the origin with power $c>-\frac{1}{2}$.  Then $g^*$
  is polynomial near the origin with power $(2c+d-1)/2 >
  -\frac{1}{2}$.  Theorem~\ref{thm:lambda} implies that $g^*$
  satisfies Condition $\mathcal{A}_1$ with any $\lambda > 1/(2c+d)$.
  It follows from \eqref{eq:star} that $g$ satisfies Condition
  $\mathcal{A}_d$ with any $\lambda > 1/(2c+d)$.
\end{proof}

\begin{proof}[Proof of Theorem~\ref{thm:main}]
  Assume that $h$ is zero near the origin or faster than polynomial
  near the origin.  Corollary~\ref{cor:lambda} implies that $h$
  satisfies Condition $\mathcal{A}_d$ with any $\lambda > 0$, and it
  follows from Proposition~\ref{prop:drift_smn} that the DA Markov
  chain is geometrically ergodic.  Now assume that $h$ is polynomial
  near the origin with power $c > \frac{n-p+2a-d-1}{2}$, which is
  strictly positive by ($N2$).  Corollary~\ref{cor:lambda} implies
  that $h$ satisfies Condition $\mathcal{A}_d$ with any $\lambda >
  1/(2c+d)$, and since
\[
\frac{1}{2c+d} < \frac{1}{n-p+2a-1} \;,
\]
Proposition~\ref{prop:drift_smn} implies that the DA Markov chain is
geometrically ergodic.
\end{proof}

\vspace*{10mm}

\noindent {\LARGE \bf Appendix}
\begin{appendix}

\begin{description}
\item[Matrix Normal Distribution] Suppose $Z$ is an $r \times c$
  random matrix with density
\[
f_{Z}(z) = \frac{1}{(2\pi)^{\frac{rc}{2}} |A|^{\frac{c}{2}}
  |B|^{\frac{r}{2}}} \exp \bigg[ -\frac{1}{2}\mbox{tr} \Big\{ A^{-1}(z
  - \theta) B^{-1} (z - \theta)^T \Big\} \bigg] \;,
\]
where $\theta$ is an $r \times c$ matrix, $A$ and $B$ are $r \times r$
and $c \times c$ positive definite matrices.  Then $Z$ is said to have
a \textit{matrix normal distribution} and we denote this by $Z \sim
\mbox{N}_{r,c} (\theta,A,B)$ \citep[][Chapter 17]{arno:1981}.

\item[Inverse Wishart Distribution] Suppose $W$ is a $d \times d$
  random positive definite matrix with density
\[
f_{W}(w) = \frac{|w|^{-\frac{m+d+1}{2}} \exp \Big \{ -\frac{1}{2}
  \mbox{tr} \big( \Theta^{-1} w^{-1} \big) \Big\}}{ 2^{\frac{md}{2}}
  \pi^{\frac{d(d-1)}{4}} |\Theta|^{\frac{m}{2}} \prod_{i=1}^d \Gamma
  \big( \frac{1}{2}(m+1-i) \big)} I_{{\cal S}_d}(W) \;,
\]
where $m > d-1$ and $\Theta$ is a $d \times d$ positive definite
matrix.  Then $W$ is said to have an \textit{inverse Wishart
  distribution} and this is denoted by $W \sim \mbox{IW}_d(m,
\Theta)$.
\end{description}

\end{appendix}

\vspace*{5mm}

\noindent {\bf \large Acknowledgment}.  The second author was
supported by NSF Grant DMS-15-11945.

\bibliographystyle{ims}
\bibliography{refs}

\end{document}